\documentclass[11pt]{amsart}

\newif\ifdraft\draftfalse

\usepackage{amssymb}
\usepackage{amsthm}
\usepackage{eucal}
\usepackage[english]{babel}
\usepackage{nicefrac}
\usepackage{times}
\usepackage{latexsym,amscd,amsmath,amsthm,amssymb}

\makeatletter
\def\@begintheorem#1#2[#3]{%
    \def\naam{#1}
  \deferred@thm@head{\the\thm@headfont \thm@indent
    \@ifempty{#1}{\let\thmname\@gobble}{\let\thmname\@iden}%
    \@ifempty{#2}{\let\thmnumber\@gobble}{\let\thmnumber\@iden}%
    \@ifempty{#3}{\let\thmnote\@gobble}{\let\thmnote\@iden}%
    \thm@swap\swappedhead\thmhead{#1}{#2}{#3}%
    \the\thm@headpunct
    \thmheadnl 
    \hskip\thm@headsep
  }%
  \ignorespaces}
\makeatother

\newcommand{\kantlijndraft}[1]{\ifdraft\hspace{-\lastskip}%
\vadjust{\vspace{-1mm}\smash{\llap{{\tt #1}\hspace{8mm}}}\vspace{1mm}}\fi}

\def\voegToe#1#2#3{\immediate\write1{\string\newlabel{#1}{{#2}{#3}}}}

\newcommand{\thlabel}[1]{\voegToe{#1}{\naam\noexpand~\thetheorem}{\thepage}\kantlijndraft{#1}}

\makeatletter
\renewcommand{\label}[1]{\voegToe{#1}{\@currentlabel}{\thepage}\kantlijndraft{#1}}
\makeatother

\newtheorem{theorem}{Theorem}[section]

\newtheorem{corollary}[theorem]{Corollary}
\newtheorem{question}[theorem]{Question}

\newtheorem{proposition}[theorem]{Proposition}
\theoremstyle{definition}
\newtheorem{example}[theorem]{Example}
\newtheorem{definition}[theorem]{Definition}
\theoremstyle{remark}

\numberwithin{equation}{section}

\newtheorem{claim2}{\sc Claim}



\newcommand{\sse}{\subseteq}						
\newcommand{\minus}{\backslash}						
\newcommand{\Un}{\bigcup}							
\newcommand{\un}{\cup}								
\newcommand{\Meet}{\bigcap}							
\newcommand{\meet}{\cap}							
\newcommand{\es}{\varnothing}						

\newcommand{\cl}[1]{\ensuremath{\overline{#1}}}

\newcommand{\scr}[1]{\ensuremath{\mathcal{#1}}}

\def\cprime{$'$}

\def\sapirovskii{{\v{S}}apirovski{\u\i}}
\def\arhangelskii{Arhangel{\cprime}ski{\u\i}}

\def\juhasz{Juh{\'a}sz}

\begin{document}

\title{On centered local $\pi$-bases}

\author{Nathan Carlson}\address{Department of Mathematics, California Lutheran University, 60 W. Olsen Rd, MC 3750, 
Thousand Oaks, CA 91360 USA}
\email{ncarlson@callutheran.edu}

\subjclass[2020]{54D10, 54A25.}


\begin{abstract}
In 1967 Hajnal and~\juhasz~showed that the cardinality of a first-countable Hausdorff space with the countable chain condition has cardinality at most $\mathfrak{c}$, the cardinality of the real line. We give an improvement of this celebrated theorem by replacing ``first-countable" with the weaker condition ``each point has a countable centered local $\pi$-base". 

Given a point $p$ in a topological space $X$, a \emph{local} $\pi$-\emph{base} $\scr{B}$ at $p$ acts like a neighborhood base at $p$ except that $p$ may not be in any member of $\scr{B}$. A local $\pi$-base $\scr{B}$ has the \emph{finite intersection property} if any finite intersection of members of $\scr{B}$ is nonempty. We call this type of local $\pi$-base \emph{centered}. A centered local $\pi$-base behaves even more like a neighborhood base in a sense. A space has the \emph{countable chain condition} if every family of pairwise disjoint open sets is countable.

We also improve a theorem of Pospi{\v s}il from 1937 using centered local $\pi$-bases. As is customary, examples are given to demonstrate these improvements are strict. Compact Hausdorff spaces are also explored in this connection, along with variations on the notion of a centered local $\pi$-base.
\end{abstract}

\maketitle


\section{Introduction.}

A landmark result in set-theoretic topology was established by Hajnal and~\juhasz~in 1967. They showed that the cardinality of a first-countable Hausdorff space with the countable chain condition has cardinality at most $\mathfrak{c}$, the cardinality of the real line. A topological space has the \emph{countable chain condition} if every family of pairwise disjoint open sets is countable. It can be seen that a separable space, one such as $\mathbb{R}$ with a countable dense set, satisfies the countable chain condition. More generally, Hajnal and~\juhasz~showed the following.

\begin{theorem}[Hajnal-\juhasz~\cite{HJ67}, 1967]\label{HJ}
If $X$ is Hausdorff then $|X|\leq 2^{c(X)\chi(X)}$.
\end{theorem}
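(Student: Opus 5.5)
We prove the Hajnal--\juhasz\ inequality by a closing-off argument combined with the Erd\H{o}s--Rado theorem $(2^\kappa)^+\to(\kappa^+)^2_\kappa$. Let $\kappa=c(X)\chi(X)$ and, for each $x\in X$, fix a local base $\scr{B}_x$ with $|\scr{B}_x|\le\kappa$.

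The first step is to bound closures of small sets. For any $A\sse X$ with $|A|\le 2^\kappa$ we want $|\cl{A}|\le 2^\kappa$, and for this it suffices that every $x\in\cl{A}$ is determined by a small amount of data from $A$. Given $x\in\cl{A}$ and $B\in\scr{B}_x$, we choose a maximal family $\scr{C}_B$ of pairwise disjoint open sets of the form $V\in\scr{B}_a$ with $a\in A$ and $V\sse B$ (more conveniently, sets $V\meet B$ witnessing local density). By the ccc, $|\scr{C}_B|\le c(X)$. Hence
\[
\textstyle\bigcup\scr{C}_B\ \text{is dense in } B\cap\text{(a neighbourhood structure near } A),
\]
and $x$ is coded by the family $\{\scr{C}_B: B\in\scr{B}_x\}$, a $\kappa$-sized family of $\kappa$-sized subsets of $\Un_{a\in A}\scr{B}_a$. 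That gives at most $(2^\kappa)^\kappa=2^\kappa$ codes.

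Injectivity of the coding uses Hausdorffness. If $x\neq y$, take disjoint basic $B_x,B_y$. The disjoint family $\scr{C}_{B_x}$ has union dense in $B_x$, so its closure is disjoint from $B_y$ up to the boundary. This requires care, and it is the main obstacle: the naive code $\{\cl{\Un\scr{C}_B}\}$ does not obviously separate points. The standard fix is to code $x$ by $\Meet_{B\in\scr{B}_x}\cl{\Un\scr{C}_B}$ and to show that this intersection is $\{x\}$ when the $\scr{C}_B$ are chosen with members of $\scr{C}_B$ meeting $A$ and $\Un\scr{C}_B$ dense in $B$. In a Hausdorff space, for $y\ne x$ there is $B\in\scr{B}_x$ with $y\notin\cl{B}$, and then $y\notin\cl{\Un\scr{C}_B}\sse\cl{B}$. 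Thus $x$ is recovered from the $\kappa$ many countable-type families drawn from the set $\scr{U}=\Un_{a\in A}\scr{B}_a$ of size $\le2^\kappa$, which yields $|\cl{A}|\le (2^\kappa)^{\kappa}=2^\kappa$. To ensure the pieces come from $\scr{U}$, we take $\scr{C}_B$ to be a maximal disjoint family of sets of the form $U\meet B$ with $U\in\scr{U}$, and record only the $U$'s. The density of $\Un\scr{C}_B$ in $B$ then follows from $x\in\cl{A}$ only near $x$, so we use $\Meet_{B}\cl{\Un\{U: U\meet B\in\scr{C}_B\}}$ together with the fact that every neighbourhood of $x$ meets some chosen $U\meet B$.

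The second step is a closing-off argument. We build an increasing chain $\{A_\alpha:\alpha<\kappa^+\}$ of subsets of size $\le2^\kappa$, where each $A_{\alpha+1}$ is the closure of $A_\alpha$ together with, for each family $\scr{W}$ of $\le\kappa$ many members of $\Un_{a\in A_\alpha}\scr{B}_a$ with $X\ne\cl{\Un\scr{W}}$, a point outside $\cl{\Un\scr{W}}$. The union $F$ is then closed, since $\chi\le\kappa$ and $\kappa^+$ is regular. If $p\in X\minus F$, then for each $x\in F$ we pick $B_x\in\scr{B}_x$ with $p\notin\cl{B_x}$. A maximal disjoint subfamily argument (ccc) produces $\le\kappa$ many members whose union closure covers $F$ but misses $p$, using that their union is dense in $\Un_x B_x$. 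These members appear at some stage, so a point outside the closure was added, which is a contradiction. Hence $X=F$ and $|X|\le\kappa^+\cdot2^\kappa=2^\kappa$.

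The delicate point in this step is again the density claim: the maximal disjoint family must be taken among sets already in the chain, which regularity of $\kappa^+$ guarantees.
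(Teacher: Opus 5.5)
\textbf{Your second step has a genuine gap.} You choose $B_x\in\scr{B}_x$ with $p\notin\cl{B_x}$ for each $x\in F$. You then claim the ccc selection gives $\scr{W}$ with $|\scr{W}|\le\kappa$, $F\sse\cl{\Un\scr{W}}$ and $p\notin\cl{\Un\scr{W}}$. The last clause does not follow. Closure does not commute with infinite unions, so knowing $p\notin\cl{B}$ for every $B\in\scr{W}$ says nothing about $\cl{\Un\scr{W}}$.

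Already in $\mathbb{R}$ with $p=0$ and $F=[1,2]$, the sets $B_x$ may be intervals $(\varepsilon_x,3)$ with $\varepsilon_x>0$ arbitrarily small. Nothing in a maximal-disjoint-family selection keeps the selected $\varepsilon_x$ away from $0$. In a regular space you could repair this by taking all $B_x$ inside one open $G\supseteq F$ with $p\notin\cl{G}$. In a Hausdorff space no such $G$ need exist, and this is exactly where the Hausdorff case needs an extra idea. You identify the "delicate point" as the density claim, but the real difficulty is here.

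There is also a smaller issue in the same step. A maximal disjoint subfamily of $\{B_x\}$ need not have union dense in $\Un_x B_x$. You must take a maximal disjoint family of nonempty open sets each contained in some $B_x$, and then pick one such $B_x$ for each.

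\textbf{How to fix step two.} The missing idea is to use $\chi(p,X)\le\kappa$ to cut $F$ into $\kappa$ pieces. For each $x\in F$, Hausdorffness gives $B_x\in\scr{B}_x$ and $V_x\in\scr{B}_p$ with $B_x\meet V_x=\es$. For $V\in\scr{B}_p$ put $F_V=\{x\in F:V_x=V\}$. The ccc argument applied to $\{B_x:x\in F_V\}$ gives $\scr{W}_V$ of size $\le\kappa$ with $F_V\sse\cl{\Un\scr{W}_V}$ and $\Un\scr{W}_V\meet V=\es$. Hence now genuinely $p\notin\cl{\Un\scr{W}_V}$.

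Accordingly, the closing-off must change. At each stage it must add a point outside $\Un_{i<\kappa}\cl{\Un\scr{W}_i}$ for every $\kappa$-sequence of $\le\kappa$-sized subfamilies $\scr{W}_i$ of $\Un_{a\in A_\alpha}\scr{B}_a$ for which that union is not all of $X$. There are still only $(2^\kappa)^\kappa=2^\kappa$ such sequences, and with this change your contradiction goes through.

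\textbf{Step one also breaks at the end.} If you record only the $U$ with $U\meet B\in\scr{C}_B$, then $\cl{\Un\{U:U\meet B\in\scr{C}_B\}}$ need not lie in $\cl{B}$, so injectivity is lost. The step is easier than you make it. For Hausdorff $X$ the map $x\mapsto\{B\meet A:B\in\scr{B}_x\}$ is injective on $\cl{A}$. This gives $|\cl{A}|\le|A|^{\chi(X)}$ with no use of ccc.

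\textbf{Comparison with the paper.} The paper's route is the neighbourhood-base case of the proof of Theorem~\ref{beautiful}. It is the Erd\H{o}s--Rado argument you announce but never use. The sets $V(x,\alpha)\meet V(x,\beta)$ on a homogeneous set of size $\kappa^+$ are nonempty because they contain $x$, and they are pairwise disjoint, so they form a cellular family of size $\kappa^+$. That route needs neither the closure bound nor any closing-off.
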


In this theorem $c(X)$ and $\chi(X)$ are \emph{cardinal functions} that extend the notions of ``countable chain condition" and ``first-countable", respectively. (See Definitions~\ref{cellularity} and~\ref{character}). Roughly, a cardinal function ``measures" a certain property of a topological space with a cardinal number. More precisely, a cardinal function is a mapping from the class of all spaces to the class of all cardinals. 

Cardinality bounds, particularly for Hausdorff spaces, have a rich tradition extending back over 100 years and remain an active area of research today. Early results were obtained by Alexandroff and Urysohn, Pospi{\v s}il,~\arhangelskii, De Groot,~\sapirovskii, and others. In 1969~\arhangelskii~introduced fundamentally new techniques and answered a 50 year old question of Alexandroff and Urysohn by showing that the cardinality of a compact, first-countable Hausdorff space is at most $\mathfrak{c}$. In the last several decades improvements, unified proofs, and new bounds have been given by Basile, Bella, Bonanzinga, Cammaroto, Carlson, Gotchev,~\juhasz, Porter, Ridderbos, Spadaro, Tkachenko, Tkachuk, and others. The theory of cardinality bounds is intertwined with the theory of \emph{cardinal inequalities}; that is, inequalities that express relationships between cardinal functions on a topological space. We will see several cardinal inequalities in this paper. For a recent survey of bounds on the cardinality of a Hausdorff space, we refer the reader to \cite{C25a}.

In this paper we give a strict improvement of the Hajnal-\juhasz~theorem. While that theorem requires every point in the space to have a countable neighborhood base, we show that in fact we can get by with a space in which every point has a countable local $\pi$-base with the finite intersection property. (See Theorem~\ref{beautiful} and its corollaries). Given a point $p$ in a space $X$, a \emph{local} $\pi$-\emph{base at} $p$ is a family $\scr{B}$ of nonempty open sets such that whenever $p$ is in an open set $U$ then there exists $B\in\scr{B}$ such that $B\sse U$. Notice that $p$ need not be a member of $B$ for any $B\in\scr{B}$, unlike a neighborhood base. A local $\pi$-base $\scr{B}$ has the \emph{finite intersection property} if every finite intersection of members of $\scr{B}$ is nonempty. We will call such a local $\pi$-base \emph{centered}. Observe that any neighborhood base at $p$ is also a centered local $\pi$-base at $p$.

For illustration let us consider local $\pi$-bases at 0 in the real line $\mathbb{R}$ with its usual topology generated by $\{(a,b):a<b\in\mathbb{R}\}$. Observe that $\scr{B}_1=\{(0,\frac{1}{n}): n<\omega\}$ is a local $\pi$-base at 0, as every open set containing 0 contains a member of $\scr{B}_1$. Furthermore, $\scr{B}_1$ is \emph{decreasing}, which implies it has the finite intersection property and is therefore centered. But as $0$ is not in any member of $\scr{B}_1$ it is not a neighborhood base at 0. Now define $\scr{B}_2=\{(-\frac{1}{n},0)\un(0,\frac{1}{n+1}):n<\omega\}\un\{(-\frac{1}{n+1},0)\un(0,\frac{1}{n}):n<\omega\}\}$. Notice that $\scr{B}_2$ is still a local $\pi$-base at $0$ and that it is also centered. However, it is not decreasing and it is not a neighborhood base at $0$. Now let $\scr{B}_3=\{(\frac{1}{n+1},\frac{1}{n}):n<\omega\}$ and observe this is a local $\pi$-base at $0$. However, $\scr{B}_3$ is not centered. In fact, $\scr{B}_3$ is a \emph{cellular family}; that is, a family of nonempty open sets that are pairwise disjoint. Finally, for a natural number $n>1$ define $U_{2n}=\left(-\frac{1}{n},-\frac{1}{n+1}\right)\un\left(0,\frac{1}{n}\right)$ and $U_{2n+1}=\left(-\frac{1}{n+1},0\right)\un\left(\frac{1}{n+1},\frac{1}{n}\right)$. Then $\scr{B}_4=\{U_n:n>1\}$ is a local $\pi$-base at 0 that it is \emph{linked}, i.e. the intersection of pairs of members of $\scr{B}_4$ is nonempty. Yet $\scr{B}_4$ is not centered.

We also give an improvement of a theorem of Pospi{\v s}il from 1937 in Theorem~\ref{pos} using centered local $\pi$-bases. Additionally, results involving compact Hausdorff spaces are given in Theorems~\ref{compact1} and~\ref{compact2} involving this type of local $\pi$-base. Examples are given in \S5 to demonstrate that our improved cardinality bounds are in fact strict improvements. Finally, throughout the paper we explore variations on the notion of a centered local $\pi$-base, including linked and decreasing local $\pi$-bases. Some of our results can be improved using these variations.

\section{Definitions and preliminary cardinal inequalities.}

For a set $A$, we let $|A|$ represent the cardinality of $A$. For a cardinal $\kappa$, define $[A]^\kappa=\{B\sse A:|B|=\kappa\}$ and $[A]^{\leq\kappa}=\{B\sse A:|B|\leq\kappa\}$. Several cardinal functions will be used in this paper, all of which we define in this section. For all undefined notions we refer the reader to \cite{Engelking} and \cite{Juhasz}.

\begin{definition}\label{character}
Let $X$ be a space and $p\in X$. Define the \emph{character of} $p$ in $X$ to be $\chi(p,X)=\min\{|N|:N\textup{ is a neighborhood base at }p\}+\omega$. The \emph{character of} $X$ is $\chi(X)=\sup\{\chi(p,X):p\in X\}$. A space $X$ is \emph{first countable} if $\chi(X)=\omega$.
\end{definition}

\begin{definition}
Let $X$ be a space and $p\in X$. A \emph{local} $\pi$-\emph{base} at $p$ is a family $\scr{B}$ of open sets such that if $p$ is in an open set $U$ then there exists $B\in\scr{B}$ such that $B\sse U$. Define the $\pi$-\emph{character of} $p$ in $X$ to be $\pi\chi(p,X)=\min\{|N|:N\textup{ is a local }\pi\textup{-base at }p\}+\omega$. The $\pi$-\emph{character of} $X$ is $\pi\chi(X)=\sup\{\pi\chi(p,X):p\in X\}$.
\end{definition}

As every neighborhood base is a local $\pi$-base, we have $\pi\chi(X)\leq\chi(X)$ for any space $X$.

\begin{definition}
For a space $X$, define the \emph{density} of $X$ to be $d(X)=\min\{|D|: D\textup{ is dense in }X\}+\omega$. A space $X$ is \emph{separable} if $d(X)=\omega$.
\end{definition}

\begin{definition}\label{cellularity}
A \emph{cellular family} in a space $X$ is a family of nonempty pairwise disjoint open sets. We define the \emph{cellularity} of $X$ to be $c(X)=\sup\{|\scr{C}|:\scr{C}\textup{ is a cellular family in }X\}+\omega$. A space has the \emph{countable chain condition} if $c(X)=\omega$.
\end{definition}

It can be easily seen that $c(X)\leq d(X)$ for any space $X$. For a cardinal $\kappa$, the Cantor Cube $X=\{0,1\}^\kappa$, where $\{0,1\}$ has the discrete topology and $X$ has the usual product topology, has the countable chain condition for any cardinal $\kappa$. 

\begin{definition} A family $\scr{A}$ of subsets of a set $Y$ has the \emph{finite intersection property} if the intersection of finitely many members of $\scr{A}$ is nonempty. Let $X$ be a space, let $p\in X$, and let $\scr{B}$ be a local $\pi$-base at $p$. We say that $\scr{B}$ is \emph{centered} if it has the finite intersection property. We say that $\scr{B}$ is \emph{linked} if $B_1\meet B_2\neq\es$ for every $B_1,B_2\in\scr{B}$.
\end{definition}

Clearly every centered local $\pi$-base is linked. The following definition seems to be new in the literature and is central in this paper.

\begin{definition}
Let $X$ be a space and $p\in X$. Define $\pi\chi l(p, X)$ to be the least infinite cardinality of a linked local $\pi$-base at $p$, define $\pi\chi c(p, X)$ to be the least infinite cardinality of a centered local $\pi$-base at $p$, and define $\pi\chi d(p, X)$ to be the least infinite cardinality of a decreasing local $\pi$-base at $p$. We define the \emph{linked local} $\pi$-\emph{character of} $X$ to be $\pi\chi l(X)=\sup\{\pi\chi l(p,X):p\in X\}$, the \emph{centered local} $\pi$-\emph{character of} $X$ to be $\pi\chi c(X)=\sup\{\pi\chi c(p,X):p\in X\}$, and the \emph{decreasing local} $\pi$-\emph{character of} $X$ to be $\pi\chi d(X)=\sup\{\pi\chi d(p,X):p\in X\}$.
\end{definition}

Observe that $\pi\chi c(X)$ and $\pi\chi l(X)$ are defined for any topological space $X$ as a neighborhood base at a point $p$ is also a centered (and linked) local $\pi$-base at $p$. Consequently, $\pi\chi c(X)\leq\chi(X)$ for any space. Furthermore, it is clear that $\pi\chi(X)\leq\pi\chi l(X)\leq\pi\chi c(X)\leq\pi\chi d(X)$. However, observe that $\pi\chi d(X)$ might not be defined for a space $X$. 

\begin{proposition}\label{decreasing}
Let $X$ be a space and let $p\in X$.  If $\pi\chi c(p,X)=\omega$ then $\pi\chi d(p,X)=\omega$.
\end{proposition}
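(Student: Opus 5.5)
Let $\scr{B}=\{B_n:n<\omega\}$ be a countable centered local $\pi$-base at $p$. The natural candidate for a decreasing local $\pi$-base is the family of finite intersections
\[
V_n=B_0\meet B_1\meet\cdots\meet B_n,\qquad n<\omega .
\]
Each $V_n$ is open as a finite intersection of open sets. It is nonempty precisely because $\scr{B}$ is centered. Moreover $V_{n+1}\sse V_n$ for all $n$, so $\{V_n:n<\omega\}$ is decreasing. It is also countable, and we may take it to be infinite, since we are dealing with the least infinite cardinality anyway.

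The whole difficulty is to show that $\{V_n\}$ is still a local $\pi$-base at $p$. Given an open $U\ni p$, we can find $B_k\sse U$. Then $V_k\sse B_k\sse U$, because $V_k$ is an intersection that includes $B_k$ as a factor. So every open neighbourhood of $p$ contains some $V_k$, which is exactly the local $\pi$-base condition. This completes the plan: $\pi\chi d(p,X)\le\omega$, and since the cardinal is infinite by definition, $\pi\chi d(p,X)=\omega$.

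The only step where I expected an obstacle was the local $\pi$-base verification. A priori, shrinking the sets might be feared to destroy the $\pi$-base property. In fact it cannot: shrinking only makes the inclusion $V_k\sse U$ easier, and the nonemptiness is handled entirely by the finite intersection property. The countability of the base is what allows us to enumerate it and form the finite initial intersections. For an uncountable centered base this construction would not produce a family of the same size that is still a local $\pi$-base, which explains why the statement is restricted to $\omega$.
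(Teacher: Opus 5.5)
Your proof is correct and is essentially the paper's own: enumerate the countable centered local $\pi$-base, take the initial finite intersections $B_0\cap\cdots\cap B_n$ (nonempty and open by centeredness), and observe the resulting decreasing family is a local $\pi$-base. You also spell out the verification $V_k\subseteq B_k\subseteq U$ that the paper only calls ``straightforward.''
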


\begin{proof}
Let $\scr{B}=\{B_1, B_2, B_3,\ldots\}$ be a countable centered local $\pi$-base at $p$. For each $n<\omega$, let $A_n=\Meet_{i=1}^nB_i$. Note that $A_n\neq\es$ and $A_n$ is open for each $n$ as $\scr{B}$ is centered. It is straightforward to see that $\{A_1,A_2,A_3,\ldots\}$ is a countable decreasing local $\pi$-base at $p$.
\end{proof}

We have the following important cardinal inequality that expresses a relationship between $\pi\chi c(X)$ and $\pi\chi(X)$.

\begin{proposition}\label{upperbound}
$\pi\chi c(X)\leq 2^{\pi\chi(X)}$ for every space $X$.
\end{proposition}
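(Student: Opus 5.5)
Fix $p\in X$, put $\kappa=\pi\chi(X)$, and let $\scr{B}$ be a local $\pi$-base at $p$ with $|\scr{B}|\leq\kappa$. The plan is to produce a centered local $\pi$-base at $p$ of size at most $2^\kappa$. The natural candidate is the collection of all nonempty \emph{finite} intersections of members of $\scr{B}$, but that family is not centered in general. So instead I will index by ultrafilter-like objects, or more concretely by subfamilies of $\scr{B}$ that are centered. Let
\[
\mathcal{F}=\{\scr{C}\sse\scr{B}:\scr{C}\text{ is centered and }\scr{C}\text{ is a local }\pi\text{-base at }p\}.
\]
If $\mathcal{F}\neq\es$, then any member of $\mathcal{F}$ witnesses $\pi\chi c(p,X)\leq\kappa\leq 2^\kappa$. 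The difficulty is that $\mathcal{F}$ may be empty; indeed the example $\scr{B}_3$ in the introduction is a cellular local $\pi$-base. So we must use sets that are not members of $\scr{B}$.

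The key idea is to take unions. For each $B\in\scr{B}$, consider the open set $U_{\scr{A}}=\Un\scr{A}$ for subfamilies $\scr{A}\sse\scr{B}$. There are at most $2^\kappa$ such unions. Let $\scr{U}$ be the set of all $U_{\scr{A}}$, where $\scr{A}\sse\scr{B}$ is a local $\pi$-base at $p$. Then each such union is nonempty and open.

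\emph{Local $\pi$-base property.} Given an open $V\ni p$, set $\scr{A}_V=\{B\in\scr{B}:B\sse V\}$. Each open $W\ni p$ satisfies $W\cap V\ni p$, so there is some $B\in\scr{B}$ with $B\sse W\cap V$. Hence $\scr{A}_V$ is itself a local $\pi$-base at $p$, and $U_{\scr{A}_V}\sse V$. So the subfamily $\{U_{\scr{A}_V}\}$ is a local $\pi$-base at $p$.

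\emph{Centeredness.} Take finitely many $V_1,\dots,V_n$ open neighborhoods of $p$. Then $V=\Meet V_i$ is an open neighborhood of $p$, so some $B\in\scr{B}$ satisfies $B\sse V$. This $B$ lies in every $\scr{A}_{V_i}$, hence $B\sse\Meet_i U_{\scr{A}_{V_i}}$, and that intersection is nonempty.

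Therefore $\scr{U}'=\{U_{\scr{A}_V}:V\text{ open}, p\in V\}$ is a centered local $\pi$-base at $p$. Its cardinality is at most $|\mathcal{P}(\scr{B})|\leq 2^\kappa$. Taking the supremum over $p$ gives $\pi\chi c(X)\leq 2^{\pi\chi(X)}$.

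The only real obstacle was finding sets that are not members of $\scr{B}$ yet are still controlled in number; indexing them by subsets of $\scr{B}$ resolves it. The verification then rests on the fact that $\scr{A}_V$ is again a local $\pi$-base at $p$, which is the step to check carefully.
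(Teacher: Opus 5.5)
Your proof is correct and is essentially the paper's own argument. Your sets $U_{\scr{A}_V}=\Un\{B\in\scr{B}:B\sse V\}$ are exactly the paper's $V_U$, and your centeredness step is the same: a single $B\sse V_1\meet\dots\meet V_n$ lies in every $\scr{A}_{V_i}$, which is the paper's inclusion $V_{U_1}\meet\dots\meet V_{U_n}\supseteq V_{U_1\meet\dots\meet U_n}\neq\es$.

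The detour through $\mathcal{F}$ and through the larger family of all unions $U_{\scr{A}}$ over local $\pi$-base subfamilies $\scr{A}$ is unnecessary. That larger family need not be centered: splitting $\scr{B}_3$ into its even- and odd-indexed intervals gives two local $\pi$-bases at $0$ with disjoint unions. You do correctly restrict to $\scr{U}'$ at the end, so the argument stands.
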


\begin{proof}
Fix $p\in X$ and a local $\pi$-base $\scr{B}$ at $p$ such that $|\scr{B}|\leq\pi\chi(X)$. For every open set $U$ containing $p$ let $V_U=\Un\{B\in\scr{B}:B\sse U\}$, a nonempty open set. Let $\scr{U}=\{V_U:U\textup{ is open and }p\in U\}$. Then $\scr{U}$ is a local $\pi$-base at $p$ and $|\scr{U}|\leq|\scr{P}(\scr{B})|\leq 2^{\pi\chi(X)}$. Finally, if $U_1,\ldots, U_n$ are open sets containing $p$ then $V_{U_1}\meet\ldots\meet V_{U_n}\supseteq V_{U_1\meet\ldots\meet U_n}\neq\es$. This shows $\scr{U}$ has the finite intersection property and is therefore centered. It follows that $\pi\chi c(p,X)\leq |\scr{U}|\leq 2^{\pi\chi(X)}$ for every $p\in X$ and that $\pi\chi c(X)\leq 2^{\pi\chi(X)}$.
\end{proof}

The following was defined by Carlson in~\cite{C23b}.

\begin{definition} Let $X$ be a space and let $p\in X$. A \emph{weak closed pseudobase} at $p$ is a family of open sets $\scr{B}$ such that $\{p\}=\Meet_{B\in\scr{B}}\cl{B}$. (Note that $p$ need not be in any member of $\scr{B}$). We define the \emph{weak closed pseudocharacter} at $p$ to be $w\psi_c(p,X)=\min\{|\scr{B}|:\scr{B}\textup{ is a weak closed pseudobase at }p\}+\omega$. The \emph{weak closed pseudocharacter} of $X$ is $w\psi_c(X)=\sup\{w\psi_c(p,X):p\in X\}$.
\end{definition}

Observe that $w\psi_c(X)$ is defined if $X$ is Hausdorff. We now define a variation.

\begin{definition} Let $X$ be a space and let $p\in X$. We define $w\psi_cF(p,X)=\min\{|\scr{B}|:\scr{B}\textup{ is a weak closed pseudobase at }p\textup{ with the finite intersection property}\}+\omega$. We then define $w\psi_cF(X)=\sup\{w\psi_cF(p,X):p\in X\}$.
\end{definition}

It was pointed out by Santi Spadaro in personal communication that in a Hausdorff space every decreasing local $\pi$-base is a weak closed pseudobase. This is true even if the local $\pi$-base is linked. More generally, we have the following relationship between $w\psi_c(X)$, $w\psi_c F(X)$, and $\pi\chi c(X)$ for a Hausdorff space $X$.

\begin{proposition}\label{basicprop}
If $X$ is Hausdorff then $w\psi_c(X)\leq w\psi_c F(X)\leq\pi\chi c(X)$ and $w\psi_c(X)\leq\pi\chi l(X)$. In particular, in a Hausdorff space every linked local $\pi$-base at a point $p$ is a weak closed pseudobase at $p$.
\end{proposition}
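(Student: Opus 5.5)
The core of the argument is the last sentence of the statement: in a Hausdorff space every linked local $\pi$-base $\scr{B}$ at $p$ is a weak closed pseudobase at $p$. The three inequalities then follow by comparing minimal cardinalities.

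To prove the key claim, fix $p$ and a linked local $\pi$-base $\scr{B}$ at $p$. We must show $\{p\}=\Meet_{B\in\scr{B}}\cl{B}$.

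First, $p\in\cl{B}$ for every $B\in\scr{B}$. Suppose not. Then there is an open $U\ni p$ with $U\meet B=\es$. Since $\scr{B}$ is a local $\pi$-base, there is $B'\in\scr{B}$ with $B'\sse U$, so $B'\meet B=\es$. This contradicts linkedness. (If $B'=B$, then $B$ would be empty, which is excluded because members of a local $\pi$-base are nonempty open sets.)

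Conversely, let $q\neq p$. By Hausdorffness choose disjoint open sets $U\ni p$ and $W\ni q$. Pick $B\in\scr{B}$ with $B\sse U$. Then $B\meet W=\es$, so $q\notin\cl{B}$. Hence the intersection is exactly $\{p\}$.

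This is the only step with any content. The one subtlety is that linkedness, not merely the local $\pi$-base property, is what forces $p$ into every closure.

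For the inequalities, note that every centered local $\pi$-base is linked. By the claim it is therefore a weak closed pseudobase with the finite intersection property, and this gives $w\psi_cF(p,X)\leq\pi\chi c(p,X)$. Likewise, a minimal linked local $\pi$-base is a weak closed pseudobase, which gives $w\psi_c(p,X)\leq\pi\chi l(p,X)$. Finally, $w\psi_c(p,X)\leq w\psi_cF(p,X)$ holds because a weak closed pseudobase with the finite intersection property is in particular a weak closed pseudobase. Each of these functions is a minimum plus $\omega$, so the inequalities pass to the $+\omega$ versions. Taking suprema over $p\in X$ yields the global inequalities.
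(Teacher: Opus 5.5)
Your proof is correct and is essentially the paper's argument. The paper shows $p\in\cl{B}$ by the same contradiction (a member $V\in\scr{B}$ with $V\sse X\minus\cl{B}$ would miss $B$) and separates each $x\neq p$ via Hausdorffness and some $B\in\scr{B}$ with $B\sse U$; the only difference is that you prove the linked case first and deduce the centered one, whereas the paper does the centered case and then remarks that only linkedness was used.
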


\begin{proof}
As every weak closed pseudobase with the finite intersection property is still a weak closed pseudobase, clearly $w\psi_c(X)\leq w\psi_c F(X)$. Let $\kappa=\pi\chi c(X)$, fix $p\in X$, and let $\scr{B}$ be a centered local $\pi$-base at $p$ such that $|\scr{B}|\leq\kappa$. Let $B\in\scr{B}$ and suppose $p\notin\cl{B}$. Then there exists $V\in\scr{B}$ such that $V\sse X\minus\cl{B}$. This contradicts that $V\meet B\neq\es$ and so we have $p\in\Meet_{B\in\scr{B}}\cl{B}$. Now, if $x\neq p$, then there exists and open set $U$ containing $p$ such that $x\in X\minus\cl{U}$. There is a $B\in\scr{B}$ such that $B\sse U$ and so $x\notin\cl{B}$. This shows $\{p\}=\Meet_{B\in\scr{B}}\cl{B}$ and therefore $\scr{B}$ is a weak closed pseudobase at $p$ with the finite intersection property. We conclude $w\psi_c F(p,X)\leq\kappa$ and as $p$ is arbitrary we have $w\psi_c F(X)\leq\kappa$.

Using the above argument, notice that we can still show the local $\pi$-base $\scr{B}$ is a weak closed pseudobase if $\scr{B}$ is merely linked. Therefore $w\psi_c(X)\leq\pi\chi l(X)$.
\end{proof}

\section{Hausdorff spaces.}

The following deep theorem from set theory has been used in the proofs of many cardinal inequalities, in particular the Hajnal-\juhasz~theorem (Theorem~\ref{HJ}). It will also be used in the proof of the main theorem in this paper, Theorem~\ref{beautiful}.

\begin{theorem}[Erd\H{o}s-Rado~\cite{ER56}]\label{ER}
Let $X$ be a set, let $\kappa$ be an infinite cardinal, and let $f:[X]^2\to\kappa$ be a function. If $|X|>2^\kappa$ then there exists $Y\sse X$ and $\alpha<\kappa$ such that $|Y|=\kappa^+$ and $f(\{x,y\})=\alpha$ for all $\{x,y\}\in [Y]^2$.
\end{theorem}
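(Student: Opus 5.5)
The plan is to prove Theorem~\ref{ER} by the standard ``end-homogeneous set'' argument. First we build a large set that is closed under realizing types over its small subsets. From it we extract a sequence of length $\kappa^+$ on which the colour of a pair depends only on the earlier element. A final pigeonhole step then finishes the proof. We may assume $|X|=(2^\kappa)^+$ by passing to a subset. For $C\sse X$ and $x\in X\minus C$, the \emph{type} of $x$ over $C$ is the function $t_x^C:C\to\kappa$ given by $t_x^C(c)=f(\{x,c\})$.

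\textbf{Step 1 (closure).} We construct an increasing chain $\{A_\gamma:\gamma<\kappa^+\}$ of subsets of $X$, each of cardinality at most $2^\kappa$, and put $A=\Un_{\gamma<\kappa^+}A_\gamma$. Let $A_0$ be any set of size $2^\kappa$, and take unions at limit stages. To form $A_{\gamma+1}$ from $A_\gamma$, consider each $C\in[A_\gamma]^{\leq\kappa}$ and each $g:C\to\kappa$. If some $x\in X\minus C$ has $t_x^C=g$, add one such witness $x$. The number of pairs $(C,g)$ is at most $(2^\kappa)^\kappa\cdot\kappa^\kappa=2^\kappa$, so $|A_{\gamma+1}|\leq 2^\kappa$ and hence $|A|\leq 2^\kappa\cdot\kappa^+=2^\kappa$.

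The key property is the following. Let $C\sse A$ with $|C|\leq\kappa$ and let $x\in X\minus C$. Since $\kappa^+$ is regular, $C\sse A_\gamma$ for some $\gamma<\kappa^+$. The witness added at stage $\gamma+1$ for the pair $(C,t_x^C)$ is then a point $y\in A\minus C$ with $t_y^C=t_x^C$.

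\textbf{Step 2 (end-homogeneous sequence).} Since $|X|>2^\kappa\geq|A|$, we can fix $x^*\in X\minus A$. By recursion on $\alpha<\kappa^+$ we choose distinct points $y_\alpha\in A$. Given $C_\alpha=\{y_\beta:\beta<\alpha\}$, which has size at most $\kappa$ and does not contain $x^*$, apply Step 1 to $C_\alpha$ and $x^*$. This yields $y_\alpha\in A\minus C_\alpha$ with $f(\{y_\alpha,y_\beta\})=f(\{x^*,y_\beta\})$ for every $\beta<\alpha$. Consequently, for $\beta<\alpha<\kappa^+$ the colour $f(\{y_\beta,y_\alpha\})$ equals $h(\beta):=f(\{x^*,y_\beta\})$, which depends only on the smaller index.

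\textbf{Step 3 (pigeonhole).} The map $h:\kappa^+\to\kappa$ has a fibre $h^{-1}(\alpha)$ of size $\kappa^+$, because $\kappa^+$ is regular and greater than $\kappa$. Let $Y=\{y_\beta:h(\beta)=\alpha\}$. Then $|Y|=\kappa^+$, and for any two members of $Y$, with indices $\beta<\beta'$, we have $f(\{y_\beta,y_{\beta'}\})=h(\beta)=\alpha$. Thus $Y$ is as required.

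The main obstacle is Step 1. We must keep the size of the closure at $2^\kappa$ while guaranteeing realizations over every small subset of the final $A$, and not merely over subsets of some fixed stage. The cardinal arithmetic $(2^\kappa)^\kappa=2^\kappa$ controls the size. The regularity of $\kappa^+$, which ensures that every $\kappa$-sized subset of $A$ lies in some stage $A_\gamma$, gives the closure.
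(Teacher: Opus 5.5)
Your proof is correct and complete. The paper gives no proof of this statement to compare with: it quotes the Erd\H{o}s--Rado theorem from \cite{ER56} and uses it only as a black box in the proof of Theorem~\ref{beautiful}.

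What you have written is the standard closure (end-homogeneous set) proof of $(2^\kappa)^+\to(\kappa^+)^2_\kappa$, and each step holds:
\begin{itemize}
\item The count $(2^\kappa)^\kappa\cdot\kappa^\kappa=2^\kappa$ keeps every stage $A_\gamma$, and hence the union $A$, of size at most $2^\kappa$.
\item Regularity of $\kappa^+$ places every $C\in[A]^{\le\kappa}$ inside a single $A_\gamma$. So the witness added at stage $\gamma+1$ for the pair $(C,t^C_x)$ exists (since $x$ itself realizes that type) and lies in $A\setminus C$.
\item Choosing $x^*\notin A$ gives $x^*\notin C_\alpha$ and $y_\beta\neq x^*$. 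So $h$ is well defined, and $f(\{y_\beta,y_\alpha\})=h(\beta)$ for all $\beta<\alpha$.
\item The pigeonhole step only needs that $\kappa$ fibres, each of size at most $\kappa$, cannot cover $\kappa^+$.
\end{itemize}

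Two details are unused: the reduction to $|X|=(2^\kappa)^+$ and the requirement $|A_0|=2^\kappa$. Only $|X|>2^\kappa\ge|A|$ matters.

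One point relevant to how the paper applies the theorem: in Theorem~\ref{beautiful} the colouring takes values in $\kappa\times\kappa$, not in $\kappa$. Your argument works verbatim for any colour set of size at most $\kappa$, since both the count of types and the pigeonhole step use only that bound. So it covers that application directly without re-indexing the colours.
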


We now present the main result in this paper. 

\begin{theorem}\label{beautiful}
If $X$ is Hausdorff then $|X|\leq 2^{c(X)\pi\chi l(X)}$.
\end{theorem}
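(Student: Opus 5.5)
Let $\kappa=c(X)\pi\chi l(X)$. For each $p\in X$ I fix a linked local $\pi$-base $\scr{B}_p$ at $p$ with $|\scr{B}_p|\leq\kappa$, enumerated as $\{B_p(\alpha):\alpha<\kappa\}$ (repetitions allowed). I will argue by contradiction, assuming $|X|>2^\kappa$, and use the Erd\H{o}s--Rado theorem (Theorem~\ref{ER}) in the same way as in the Hajnal--\juhasz~proof. The point is to replace disjoint neighborhoods by disjoint members of the local $\pi$-bases.

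The coloring rests on one separation claim. If $p\neq q$, then there are $\alpha,\beta<\kappa$ with $B_p(\alpha)\meet B_q(\beta)=\es$. To see this, choose disjoint open sets $U\ni p$ and $V\ni q$ by the Hausdorff property. Then pick $B_p(\alpha)\sse U$ and $B_q(\beta)\sse V$. Since $\kappa\times\kappa$ has size $\kappa$, I fix a well-order of $X$ and define $f:[X]^2\to\kappa\times\kappa$ by sending $\{p,q\}$ with $p<q$ to the least such pair $(\alpha,\beta)$. Theorem~\ref{ER} then gives $Y\sse X$ with $|Y|=\kappa^+$ and a fixed pair $(\alpha,\beta)$ such that $B_p(\alpha)\meet B_q(\beta)=\es$ whenever $p<q$ are in $Y$.

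The main obstacle is turning this into a cellular family. In Hajnal--\juhasz, the sets $U_p(\alpha)\meet U_p(\beta)$ are pairwise disjoint neighborhoods. Here I need an intersection $B_p(\alpha)\meet B_p(\beta)$, which is exactly where linkedness enters: since both sets lie in the linked family $\scr{B}_p$, the set $W_p=B_p(\alpha)\meet B_p(\beta)$ is a nonempty open set. For $p<q$ in $Y$ we get $W_p\meet W_q\sse B_p(\alpha)\meet B_q(\beta)=\es$.

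Distinct points $p\neq q$ of $Y$ give disjoint nonempty sets $W_p\neq W_q$, so $\{W_p:p\in Y\}$ is a cellular family of size $\kappa^+>c(X)$. This is a contradiction, so $|X|\leq 2^\kappa$. Only pairwise intersections are used, which is why linked rather than centered suffices. As a consequence, $|X|\leq 2^{c(X)\pi\chi c(X)}$, and $|X|\leq\mathfrak{c}$ for ccc spaces with countable centered local $\pi$-bases.
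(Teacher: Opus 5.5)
Your proof is correct and is essentially the paper's own argument. The paper uses the same Erd\H{o}s--Rado coloring on $[X]^2$ by the pair of indices of disjoint members of the linked local $\pi$-bases. It then takes the cellular family $\{B_p(\alpha)\meet B_p(\beta):p\in Y\}$, which has nonempty members precisely because each $\scr{B}_p$ is linked.
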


\begin{proof}
Let $\kappa=c(X)\pi\chi l(X)$ and for each $x\in X$, let $\scr{V}_x=\{V(x,\alpha):\alpha<\kappa\}$ be a linked local $\pi$-base at $x$ such that $|\scr{V}_x|\leq\kappa$. Fix a linear ordering $<$ on $X$. For each $x\neq y\in X$ with $x<y$ there exists disjoint open sets $U$ and $V$ containing $x$ and $y$, respectively. As $\scr{V}_x$ and $\scr{V}_y$ are local $\pi$-bases at $x$ and $y$ there exist $\alpha(x,y)<\kappa$ and $\beta(x,y)<\kappa$ such that $V(x, \alpha(x,y))\sse U$ and $V(y,\beta(x,y))\sse V$. It follows that $V(x, \alpha(x,y))\meet V(y,\beta(x,y))=\es$.

Assume by way of contradiction that $|X|>2^\kappa$. Define the map $f:[X]^2\to\kappa\times\kappa$ by $f(\{x,y\})=(\alpha(x,y),\beta(x,y))$. By the Erd\H{o}s-Rado theorem (see Theorem~\ref{ER} above), there exists $(\alpha,\beta)\in\kappa\time\kappa$ and a set $Y\in[X]^{\kappa^+}$ such that $\alpha(x,y)=\alpha$ and $\beta(x,y)=\beta$ for all $\{x,y\}\in[Y]^2$.

For each $x\in Y$, let $U_x=V(x,\alpha)\meet V(x,\beta)$ and note $U_x$ is nonempty as $\scr{V}_x$ is linked. Also, for each $x,y\in Y$ with $x<y$, we have $U_x\meet U_y=V(x,\alpha)\meet V(x,\beta)\meet V(y,\alpha)\meet V(y,\beta)\sse V(x,\alpha)\meet V(y,\beta)=V(x, \alpha(x,y))\meet V(y,\beta(x,y))=\es$. Therefore, $\{U_x:x\in Y\}$ is a cellular family with cardinality exactly $\kappa^+$. This contradicts that $c(X)\leq\kappa$. We conclude $|X|\leq 2^\kappa$.
\end{proof}

The above proof is a variation of a proof of the Hajnal-\juhasz~theorem that is given in~\cite{Juhasz}. In that proof each $\scr{V}_x$ is a neighborhood base. Therefore the open set $U_x=V(x,\alpha)\meet V(x,\beta)$ is nonempty because it contains $x$ itself. However, in the proof of Theorem~\ref{beautiful} above each $U_x$ is nonempty because $\scr{V}_x$ is linked. It is ultimately this significant difference that allows us to replace the character $\chi(X)$ with the smaller $\pi\chi l(X)$, achieving a stronger cardinality bound.

\begin{corollary} If $X$ is a Hausdorff space with the countable chain condition such that every point has a countable linked local $\pi$-base  then $|X|\leq\mathfrak{c}$.
\end{corollary}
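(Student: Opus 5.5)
This corollary is a direct specialization of Theorem~\ref{beautiful}, so the plan is simply to compute the two cardinal functions appearing in that bound. Let $X$ be a Hausdorff space with the countable chain condition in which every point has a countable linked local $\pi$-base. By Definition~\ref{cellularity}, the countable chain condition means exactly $c(X)=\omega$.

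Next we bound the linked local $\pi$-character. For each $p\in X$ the hypothesis gives a countable linked local $\pi$-base at $p$, so $\pi\chi l(p,X)\leq\omega$. Since the definition takes the least \emph{infinite} cardinality, in fact $\pi\chi l(p,X)=\omega$. Taking the supremum over $p\in X$ yields $\pi\chi l(X)=\omega$.

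Applying Theorem~\ref{beautiful} then gives
\[
|X|\leq 2^{c(X)\pi\chi l(X)}=2^{\omega\cdot\omega}=2^\omega=\mathfrak{c},
\]
which is the claim.

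There is essentially no obstacle here. The only point requiring care is that the hypotheses match those of Theorem~\ref{beautiful} precisely: the theorem uses only linkedness, not centeredness, so the weaker ``linked'' assumption in the corollary suffices. In particular, the step where $U_x=V(x,\alpha)\meet V(x,\beta)$ must be nonempty uses only that $\scr{V}_x$ is linked.
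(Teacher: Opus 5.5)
Your proposal is correct and matches the paper, which states this corollary as an immediate consequence of Theorem~\ref{beautiful}. Setting $c(X)=\omega$ and $\pi\chi l(X)=\omega$ gives $|X|\leq 2^{\omega}=\mathfrak{c}$.
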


We also have the following corollary. This result was established for regular Hausdorff spaces in~\cite{C23b} but in fact the regular condition can be dropped entirely by using Theorem~\ref{beautiful}. 

\begin{corollary}\label{nicecorollary}
If $X$ is Hausdorff then $|X|\leq 2^{c(X)^{\pi\chi(X)}}$.
\end{corollary}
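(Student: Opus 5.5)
Corollary~\ref{nicecorollary} follows from Theorem~\ref{beautiful} once we bound $\pi\chi l(X)$ in terms of $c(X)$ and $\pi\chi(X)$. Proposition~\ref{upperbound} gives $\pi\chi l(X)\leq\pi\chi c(X)\leq 2^{\pi\chi(X)}$, but that estimate is too weak here: it would only yield $|X|\leq 2^{c(X)\cdot 2^{\pi\chi(X)}}$. The plan is therefore to refine the construction in Proposition~\ref{upperbound} so that it produces a centered local $\pi$-base of size at most $c(X)^{\pi\chi(X)}$. Theorem~\ref{beautiful} then gives $|X|\leq 2^{c(X)\cdot c(X)^{\pi\chi(X)}}=2^{c(X)^{\pi\chi(X)}}$.

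Fix $p\in X$ and a local $\pi$-base $\scr{B}$ at $p$ with $|\scr{B}|\leq\pi\chi(X)=\lambda$, and put $\kappa=c(X)$. As in Proposition~\ref{upperbound}, for each open $U\ni p$ let $\scr{B}_U=\{B\in\scr{B}:B\sse U\}$ and $V_U=\Un\scr{B}_U$. Since $V_{U_1}\meet\cdots\meet V_{U_n}\supseteq V_{U_1\meet\cdots\meet U_n}\neq\es$, the family of all $V_U$ is a centered local $\pi$-base. The task is to replace each $V_U$ by an open set $W_U$ with four properties: it lies inside $U$, it is nonempty, the family $\{W_U\}$ is still centered, and it is determined by at most $\kappa$ members of $\scr{B}$. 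If every $W_U$ is determined by a subset of $\scr{B}$ of size $\leq\kappa$, there are at most $\lambda^{\kappa}$ such sets. So I will aim for $W_U=\Un\scr{C}$, where $\scr{C}\sse\scr{B}_U$ and $|\scr{C}|\leq\kappa$.

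The natural choice is to let $\scr{C}_U\sse\scr{B}_U$ have size $\leq c(X)$ with $\Un\scr{C}_U$ dense in $V_U$. Such a subfamily exists by the standard fact that any union of open sets has a subfamily of size $\leq c(X)$ with dense union: take a maximal cellular family of open sets each lying inside some member of $\scr{B}_U$, and pick one such member for each of its elements. Then put $W_U=\Un\scr{C}_U$. Centeredness survives. If $W_{U_1}\meet\cdots\meet W_{U_n}$ were empty, then, since each $W_{U_i}$ is dense in the open set $V_{U_i}$, the intersection $V_{U_1}\meet\cdots\meet V_{U_n}$ would be empty as well. This uses the fact that finitely many open sets, each dense in an open $V_i$, have intersection dense in $\Meet V_i$. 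The family $\{W_U\}$ is still a local $\pi$-base, since $\es\neq W_U\sse U$. It has size at most $|[\scr{B}]^{\leq\kappa}|\leq\lambda^{\kappa}$.

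The main obstacle is that this produces the bound $\pi\chi c(X)\leq\pi\chi(X)^{c(X)}$, whereas the corollary has the exponent order $c(X)^{\pi\chi(X)}$. I would first check whether $\lambda^\kappa\leq 2^{\kappa\lambda}$ already suffices, since then $|X|\leq 2^{\kappa\cdot 2^{\kappa\lambda}}$, which is too big. So the construction should instead be indexed by functions $\lambda\to\kappa$. One route: for each $B\in\scr{B}$, fix a maximal cellular family $\scr{M}_B$ of sets from $\scr{B}$ inside $B$, which has size $\leq\kappa$. Then code each $W_U$ by selecting, for each $B\in\scr{B}$, at most one element of $\scr{M}_B$, or none. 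That gives at most $(\kappa+1)^{\lambda}=\kappa^{\lambda}$ codes, and one must verify that the resulting unions still form a centered local $\pi$-base. If that coding fails, the fallback is to prove $|X|\leq 2^{\kappa\pi\chi c(X)}$ directly, via the Erd\H{o}s--Rado argument of Theorem~\ref{beautiful}, applied to the $W_U$'s coded through cellular families.
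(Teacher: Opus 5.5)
There is a genuine gap, and it starts in your first paragraph. You discard the estimate $|X|\leq 2^{c(X)\cdot 2^{\pi\chi(X)}}$ as too weak, but it is exactly strong enough. Since $c(X)\geq\omega\geq 2$, you have $2^{\pi\chi(X)}\leq c(X)^{\pi\chi(X)}$, and trivially $c(X)\leq c(X)^{\pi\chi(X)}$. Because $c(X)^{\pi\chi(X)}$ is infinite, it follows that $c(X)\cdot 2^{\pi\chi(X)}\leq c(X)^{\pi\chi(X)}$. So Theorem~\ref{beautiful}, the inequality $\pi\chi l(X)\leq\pi\chi c(X)$, and Proposition~\ref{upperbound} give at once
\[
|X|\leq 2^{c(X)\pi\chi l(X)}\leq 2^{c(X)\pi\chi c(X)}\leq 2^{c(X)\cdot 2^{\pi\chi(X)}}\leq 2^{c(X)^{\pi\chi(X)}}.
\]
This chain is the paper's entire proof. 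In your own language: the sets $V_U$ of Proposition~\ref{upperbound} are coded by subsets of $\scr{B}$, i.e.\ by functions $\scr{B}\to\{0,1\}\sse c(X)$. That family is therefore already ``indexed by functions $\lambda\to\kappa$'', and no refinement is needed.

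Because you rejected this route, what you actually wrote does not establish the corollary. Your refinement $W_U=\Un\scr{C}_U$ is correct as far as it goes; the maximal-cellular-family argument for density and the centeredness argument are both fine. But it proves $\pi\chi c(X)\leq\pi\chi(X)^{c(X)}$, which is a different bound. It is not in general below $c(X)^{\pi\chi(X)}$: for example, $\omega^{\mathfrak{c}}=2^{\mathfrak{c}}>\mathfrak{c}=\mathfrak{c}^{\omega}$, as you yourself note. The subsequent coding through the families $\scr{M}_B$ is never verified, and the ``fallback'' is only named, so the proposal ends without a proof. The missing ingredient is simply the cardinal inequality $2^{\lambda}\leq\kappa^{\lambda}$ for $\kappa\geq 2$, together with $\kappa\cdot\kappa^{\lambda}=\kappa^{\lambda}$ for infinite $\kappa$.
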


\begin{proof}
By Theorem~\ref{beautiful} and Proposition~\ref{upperbound} we have 
$$|X|\leq 2^{c(X)\pi\chi l(X)}\leq 2^{c(X)\pi\chi c(X)}\leq 2^{c(X)\cdot 2^{\pi\chi(X)}}\leq 2^{c(X)^{\pi\chi(X)}}.$$
\end{proof}

This shows that the cardinal functions $c(X)$ and $\pi\chi(X)$ together can be used to bound the cardinality of a Hausdorff space. We note that Corollary~\ref{nicecorollary} also improves the bound $2^{d(X)^{\pi\chi(X)}}$ for the cardinality of a Hausdorff space, shown in~\cite{C23b}.

In 1937 Pospi{\v s}il established another bound for the cardinality of a Hausdorff space.

\begin{theorem}[Pospi{\v s}il~\cite{Pos37}, 1937]\label{pos}
If $X$ is Hausdorff then $|X|\leq d(X)^{\chi(X)}$.
\end{theorem}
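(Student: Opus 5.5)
We prove Pospi{\v s}il's bound $|X|\leq d(X)^{\chi(X)}$ by coding each point with a small family of subsets of a fixed dense set and checking that the coding is injective. Let $\kappa=\chi(X)$ and fix a dense set $D\sse X$ with $|D|\leq d(X)$. For each $x\in X$ fix a neighborhood base $\scr{V}_x$ at $x$ with $|\scr{V}_x|\leq\kappa$. Define
$$\phi(x)=\{V\meet D:V\in\scr{V}_x\}.$$
Then $\phi(x)$ is a family of at most $\kappa$ subsets of $D$, so $\phi$ maps $X$ into $[\scr{P}(D)]^{\leq\kappa}$.

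The first step is to show that $\phi$ is injective. Since $X$ is Hausdorff and $D$ is dense, for every $x$ we have
$$\{x\}=\Meet_{V\in\scr{V}_x}\cl{V\meet D}.$$
To see this, note that $\cl{V\meet D}=\cl{V}$ because $V$ is open and $D$ is dense, so each such closure contains $x$. If $y\neq x$, choose disjoint open sets $U\ni x$ and $W\ni y$, and pick $V\in\scr{V}_x$ with $V\sse U$; then $y\notin\cl{V}$. Hence $x$ is recovered from $\phi(x)$ as the unique point of $\Meet\{\cl{A}:A\in\phi(x)\}$, so $\phi(x)=\phi(y)$ forces $x=y$.

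This gives only $|X|\leq|[\scr{P}(D)]^{\leq\kappa}|\leq(2^{d(X)})^{\kappa}$, which is weaker than the target. The main obstacle is to bring the count down to $d(X)^\kappa$, which requires coding each $V\meet D$ by something of size at most $\kappa$ rather than by an arbitrary subset of $D$.

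To do this, I would use the standard Pospi{\v s}il refinement: replace $V\meet D$ by a subset of $D$ of size at most $\kappa$ whose closure still contains $x$. The relevant fact is that $x\in\cl{D}$ together with $\chi(x,X)\leq\kappa$ gives $t(x,X)\leq\kappa$, so there is $S_x\in[D]^{\leq\kappa}$ with $x\in\cl{S_x}$. Concretely, pick one point $d_V\in V\meet D$ for each $V\in\scr{V}_x$ and set $S_x=\{d_V:V\in\scr{V}_x\}$. Then every neighborhood of $x$ meets $S_x$, and in the Hausdorff setting $S_x$ converges to $x$ along the directed base $\scr{V}_x$, in the sense that it is eventually inside each neighborhood of $x$.

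Next I want $x$ to be determined by $S_x$, or by a family of at most $\kappa$ subsets of $S_x$. Set
$$\psi(x)=\{S_x\meet V:V\in\scr{V}_x\}\in[[D]^{\leq\kappa}]^{\leq\kappa}.$$
By the same Hausdorff argument as above, $\{x\}=\Meet\{\cl{A}:A\in\psi(x)\}$. Indeed, each $\cl{S_x\meet V}$ contains $x$, since for every $W\in\scr{V}_x$ the point $d_{W'}$ lies in $S_x\meet V\meet W$ for some $W'\sse V\meet W$. The Hausdorff separation then excludes every other point. So $\psi$ is injective.

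Finally, count: $|[D]^{\leq\kappa}|\leq d(X)^\kappa$, and therefore
$$|X|\leq\left(d(X)^{\kappa}\right)^{\kappa}=d(X)^{\kappa}=d(X)^{\chi(X)}.$$
The delicate point I would check most carefully is the choice of $W'\sse V\meet W$ within $\scr{V}_x$, which is exactly where a genuine neighborhood base is used. This is also the step the paper's later improvement via centered local $\pi$-bases presumably adapts.
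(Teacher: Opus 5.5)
Your proof is correct, and it is essentially the argument the paper uses for its generalization, Theorem~\ref{density}; the paper itself only cites Pospi{\v s}il's theorem. Both code $x$ by $\{B\meet D_x:B\in\scr{B}_x\}\in[[D]^{\leq\kappa}]^{\leq\kappa}$ with $D_x\in[D]^{\leq\kappa}$, and recover $x$ as the unique point of $\Meet_{B\in\scr{B}_x}\cl{B\meet D_x}$. The one difference is that the paper picks a witness $d(\{B_1,B_2\})\in B_1\meet B_2\meet D$ for each pair, rather than one point $d_V$ per basic set; this removes your need for some $W'\sse V\meet W$, and it is what lets the argument work for merely linked (not necessarily centered) local $\pi$-bases.
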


Pospi{\v s}il's theorem has been improved in several ways over the years by Bella and Cammaroto; Carlson; Gotchev, Tkachenko, and Tkachuk; and Willard and Dissanayake. We present here another improvement using linked local $\pi$-bases.

\begin{theorem}\label{density}
If $X$ is Hausdorff then $|X|\leq d(X)^{\pi\chi l(X)}$.
\end{theorem}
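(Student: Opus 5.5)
The plan is to adapt the standard proof of Pospi{\v s}il's theorem, replacing neighborhood bases with linked local $\pi$-bases and using Proposition~\ref{basicprop}. That proposition says that in a Hausdorff space a linked local $\pi$-base at $p$ is a weak closed pseudobase at $p$. Let $\kappa=\pi\chi l(X)$ and fix a dense set $D$ with $|D|\leq d(X)$. For each $x\in X$ fix a linked local $\pi$-base $\scr{V}_x$ at $x$ with $|\scr{V}_x|\leq\kappa$. By Proposition~\ref{basicprop},
$$\{x\}=\Meet_{V\in\scr{V}_x}\cl{V}.$$

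Next I would code each open set by a trace on $D$. For each nonempty open $V$ we have $\cl{V}=\cl{V\meet D}$, since $D$ is dense and $V$ is open. So for $x\in X$ define
$$\Phi(x)=\{V\meet D:V\in\scr{V}_x\}\in[\scr{P}(D)]^{\leq\kappa}.$$
Then $x$ is recovered from $\Phi(x)$ as the unique point of $\Meet\{\cl{A}:A\in\Phi(x)\}$. Hence $\Phi$ is injective, which gives $|X|\leq|[\scr{P}(D)]^{\leq\kappa}|\leq(2^{d(X)})^\kappa=2^{d(X)\kappa}$.

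This bound is too weak; the target is $d(X)^\kappa$, and closing that gap is the main obstacle. I would first try to replace the traces $V\meet D$, which are arbitrary subsets of $D$, by sets of size at most $\kappa$. The tool is the standard fact that every point of the closure of a set is in the closure of a subset of size at most the tightness, and here $t(X)\leq\pi\chi(X)\leq\kappa$ is not automatic. Instead, I would use the $\pi$-base directly.

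For $y\notin\cl{V}$ there is an open $U\ni y$ with $U\meet V=\es$, and this $U$ contains some $W\in\scr{V}_y$. I would then aim to show that $x$ is determined by the $\kappa$-sized family of points of $D$ chosen inside the members of $\scr{V}_x$ and iterated $\kappa$ times. The construction runs as follows. Build an increasing chain $D_0\sse D_1\sse\cdots$ of length $\kappa^+$ of subsets of $D$ with $|D_\alpha|\leq d(X)^\kappa$. At each stage, add for every $\kappa$-sized subfamily of the sets already coded a point of $D$, in the style of the closing-off arguments of \cite{Juhasz}. Then argue by contradiction, as in Theorem~\ref{beautiful}, that every point is coded. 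The alternative, if this becomes messy, is to mimic the published proof of Pospi{\v s}il's theorem via $|X|\leq|\cl{D}|\leq |D|^{\kappa}$ using convergent nets. Here linkedness guarantees that the filter generated by $\scr{V}_x$ is proper, so choosing $d_V\in V\meet D$ gives a $\kappa$-indexed family in $D$. The Hausdorff condition and Proposition~\ref{basicprop} should then make $x$ the unique cluster point of this family along that filter, so that $x\mapsto(d_V)_{V\in\scr{V}_x}$ is injective into $D^\kappa$. Checking the uniqueness of this cluster point is the step I expect to require care.
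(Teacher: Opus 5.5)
Your proposal does not reach $d(X)^{\pi\chi l(X)}$. The trace coding gives only $2^{d(X)\kappa}$, as you say. The closing-off chain of length $\kappa^+$ is never specified: you do not say what is added at each stage or why every point ends up ``coded'', so it is not an argument. The net alternative rests on two faulty steps.

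First, linkedness does \emph{not} make the filter generated by $\scr{V}_x$ proper. That needs the finite intersection property, i.e.\ a centered base; the family $\scr{B}_4$ of the introduction is linked but not centered.

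Second, even for a neighborhood base, choosing one point $d_V\in V\meet D$ per member and coding $x$ by $(d_{V(x,\alpha)})_{\alpha<\kappa}\in D^\kappa$ is not injective. The filter along which $x$ is the unique cluster point depends on $\scr{V}_x$ and cannot be read off from the code. For example, in $\mathbb{R}$ with $D=\mathbb{Q}$, put $V(0,2k)=(-\tfrac{1}{k+1},\tfrac{1}{k+1})$ and $V(\tfrac12,2k+1)=(\tfrac12-\tfrac{1}{k+2},\tfrac12+\tfrac{1}{k+2})$, and let $V(0,2k+1)=V(\tfrac12,2k)=(-k-2,k+2)$. Then any rationals $d_{2k}\in V(0,2k)$ and $d_{2k+1}\in V(\tfrac12,2k+1)$ form a legitimate choice for both points, so $0$ and $\tfrac12$ receive the same code. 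For a centered base you could repair this by indexing over the fixed directed set $[\kappa]^{<\omega}$, taking $d(x,s)\in\Meet_{\alpha\in s}V(x,\alpha)\meet D$ and using uniqueness of net limits. However, that proves only the weaker bound $|X|\leq d(X)^{\pi\chi c(X)}$.

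The missing idea is to use linkedness on \emph{pairwise} intersections. Choose $d(x,\alpha,\beta)\in V(x,\alpha)\meet V(x,\beta)\meet D$ for all $\alpha,\beta<\kappa$. For fixed $\beta$, the set $A_\beta=\{d(x,\alpha,\beta):\alpha<\kappa\}$ lies inside $V(x,\beta)$. It also has $x$ in its closure, since any open $U\ni x$ contains some $V(x,\alpha)$ and hence the point $d(x,\alpha,\beta)$. This replaces the tightness bound you were looking for. Proposition~\ref{basicprop} then gives $\{x\}\sse\Meet_{\beta<\kappa}\cl{A_\beta}\sse\Meet_{\beta<\kappa}\cl{V(x,\beta)}=\{x\}$. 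Hence $x\mapsto(d(x,\alpha,\beta))_{\alpha,\beta<\kappa}\in D^{\kappa\times\kappa}$ is injective, and $|X|\leq d(X)^{\kappa}$. This is essentially the paper's proof, phrased there via the set $D_x$ of chosen points of $D$ in pairwise intersections of members of $\scr{B}_x$ and the code $\phi(x)=\{B\meet D_x:B\in\scr{B}_x\}$.
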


\begin{proof}
Let $\kappa=d(X)$ and $\lambda=\pi\chi l(X)$. Let $D$ be a dense subset of $X$ such that $|D|=\kappa$. For every $x\in X$, let $\scr{B}_x$ be a linked local $\pi$-base at $x$ such that $|\scr{B}_x|\leq\lambda$. 

For each $x\in X$ and $\{B_1,B_2\}\in[\scr{B}_x]^2$ we have that $B_1\meet B_2\neq\es$ as $\scr{B}_x$ is linked. Therefore there exists $d(\{B_1,B_2\})\in B_1\meet B_2\meet D$. Define $D_x=\{d(\{B_1,B_2\}):\{B_1,B_2\}\in[\scr{B}_x]^2\}$ and note $D_x\in[D]^{\leq\lambda}$. Define a function $\phi:X\to [[D]^{\leq\lambda}]^{\leq\lambda}$ by $\phi(x)=\{B\meet D_x:B\in\scr{B}_x\}$.

We will show that $x\in\cl{B\meet D_x}$ for each $x\in X$ and $B\in\scr{B}_x$. Let $U$ be an arbitrary open set containing $x$. As $\scr{B}_x$ is a local $\pi$-base there exists $V\in\scr{B}_x$ such that $V\sse U$. It follows that $d(\{V,B\})\in V\meet B\meet D_x\sse U\meet B\meet D_x$ and $x\in\cl{B\meet D_x}$. By Proposition~\ref{basicprop} we have $x\in\Meet_{B\in\scr{B}_x}\cl{B\meet D_x}\sse\Meet_{B\in\scr{B}_x}\cl{B}=\{x\}$, 
and so $\{x\}=\Meet_{B\in\scr{B}_x}\cl{B\meet D_x}$. This shows that $\phi$ is a one-to-one function and therefore $|X|\leq\left|[[D]^{\leq\lambda}]^{\leq\lambda}\right|\leq(\kappa^\lambda)^\lambda=\kappa^\lambda$.
\end{proof}

\section{Compact Hausdorff spaces.}

In this section we present two results (Theorems~\ref{compact1} and~\ref{compact2}) that involve $\pi\chi c(X)$ for a compact Hausdorff space $X$. Recall $w\psi_cF(X)\leq\pi\chi c(X)$ for any Hausdorff space $X$ by Proposition~\ref{basicprop}. We show below that in fact these two cardinal functions are equivalent if $X$ is additionally compact.


\begin{theorem}\label{compact1}
If $X$ is compact and Hausdorff then $\pi\chi c(X)=w\psi_c F(X)$.
\end{theorem}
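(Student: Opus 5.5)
One inequality is already available: Proposition~\ref{basicprop} gives $w\psi_cF(X)\leq\pi\chi c(X)$ for any Hausdorff space. So the plan is to prove the reverse inequality $\pi\chi c(p,X)\leq w\psi_cF(p,X)$ at each point $p$ of a compact Hausdorff space.

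Fix $p\in X$ and let $\scr{B}$ be a weak closed pseudobase at $p$ with the finite intersection property, with $|\scr{B}|\leq\kappa=w\psi_cF(p,X)$. Enlarge $\scr{B}$ by letting $\scr{B}'$ be the family of all finite intersections of members of $\scr{B}$. Each member of $\scr{B}'$ is nonempty and open by the finite intersection property, and $|\scr{B}'|\leq\kappa$. Moreover $\scr{B}'$ is closed under finite intersections, so it is centered. It is still a weak closed pseudobase, because
$$\{p\}\sse\Meet_{B\in\scr{B}'}\cl{B}\sse\Meet_{B\in\scr{B}}\cl{B}=\{p\}.$$
The first inclusion holds because $\cl{B_1\meet\cdots\meet B_n}$ contains $p$. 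To see this, note that every member of $\scr{B}'$ is itself such a finite intersection, and we check $p\in\cl{C}$ for all $C\in\scr{B}'$ as follows. The closures $\{\cl{C}:C\in\scr{B}'\}$ form a centered family of closed sets in a compact space, so their intersection is nonempty. Each $\cl{C}\sse\cl{B}$ for some $B\in\scr{B}$, so this intersection lies in $\Meet_{B\in\scr{B}}\cl{B}=\{p\}$. Hence the intersection is exactly $\{p\}$, and in particular $p\in\cl{C}$ for every $C$.

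The main step is to show that $\scr{B}'$ is a local $\pi$-base at $p$, using compactness. Let $U$ be an open set containing $p$. Then $\Meet_{C\in\scr{B}'}\cl{C}=\{p\}\sse U$, so the closed sets $\cl{C}\minus U$, for $C\in\scr{B}'$, have empty intersection in the compact space $X$. Hence finitely many of them already have empty intersection: there are $C_1,\ldots,C_n\in\scr{B}'$ with $\cl{C_1}\meet\cdots\meet\cl{C_n}\sse U$. Put $C=C_1\meet\cdots\meet C_n\in\scr{B}'$. Then $C\neq\es$ and $C\sse\cl{C}\sse U$.

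Therefore $\scr{B}'$ is a centered local $\pi$-base at $p$ of size at most $\kappa$, which gives $\pi\chi c(p,X)\leq w\psi_cF(p,X)$. Taking suprema over $p$ and combining with Proposition~\ref{basicprop} yields $\pi\chi c(X)=w\psi_cF(X)$. The only potential obstacle is this compactness argument, and it goes through directly because closing the family under finite intersections makes it downward directed.
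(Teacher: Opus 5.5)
Your proof is correct and follows the paper's route: close the finite-intersection pseudobase under finite intersections, then use compactness to get finitely many members whose closures meet inside a given neighborhood $U$ of $p$. The paper phrases this step as a finite subcover of $X\minus U$ by the sets $X\minus\cl{B}$, which is the same argument as yours. One small point: your verification that $\scr{B}'$ is itself a weak closed pseudobase is harmless but unnecessary, since the compactness step works directly from $\Meet_{B\in\scr{B}}\cl{B}=\{p\}$. Also, the inclusion you use there should be justified by $\scr{B}\sse\scr{B}'$, not by ``each $\cl{C}\sse\cl{B}$ for some $B$''.
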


\begin{proof}
As $w\psi_c F(X)\leq\pi\chi c(X)$ for any Hausdorff space, we only need to show $\pi\chi c(X)\leq w\psi_c F(X)$. Let $p\in X$ and let $\kappa=w\psi_c F(X)$. We show that $\pi\chi c(p, X)\leq\kappa$. As $p$ is arbitrary this will show that $\pi\chi c(X)\leq w\psi_c F(X)$. Let $\scr{U}$ be a family of open sets with the finite intersection property such that $|\scr{U}|\leq\kappa$ and $\{p\}=\Meet_{U\in\scr{U}}\cl{U}$.

Let $V$ be any open set containing $p$. Then $X\minus V$ is compact. We have $X\minus V\sse X\minus\{p\}=X\minus\Meet_{U\in\scr{U}}\cl{U}=\Un_{U\in\scr{U}}X\minus\cl{U}$. This shows $\{X\minus\cl{U}:U\in\scr{U}\}$ is an open cover of the compact set $X\minus V$. There exists a finite subfamily $\scr{V}$ of $\scr{U}$ such that $X\minus V\sse\Un_{U\in\scr{V}} X\minus\cl{U}=X\minus\Meet_{U\in\scr{V}}\cl{U}$. As $\scr{U}$ has the finite intersection property, we see that $\es\neq\Meet\scr{V}\sse\Meet_{U\in\scr{V}}\cl{U}\sse V$.
This shows that $\scr{B}=\{\Meet\scr{V}:\scr{V}\textup{ is a finite subfamily of }\scr{U}\}$ is a local $\pi$-base at $p$ consisting of nonempty open sets. Also, observe that $\scr{B}$ is centered because $\scr{U}$ has the finite intersection property. Finally, we see that $|\scr{B}|\leq\left|[\scr{U}]^{<\omega}\right|\leq\kappa$. This completes the proof.
\end{proof}

By Theorems~\ref{beautiful} and~\ref{compact1}, we have an improvement of Theorem~\ref{beautiful} for compact Hausdorff spaces.

\begin{corollary}
If $X$ is compact and Hausdorff then $|X|\leq 2^{c(X)w\psi_cF(X)}$.
\end{corollary}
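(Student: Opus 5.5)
This corollary follows by chaining Theorem~\ref{beautiful} with Theorem~\ref{compact1}, using the intermediate cardinal $\pi\chi c(X)$. We cannot substitute $w\psi_cF(X)$ directly into Theorem~\ref{beautiful}, since that theorem is stated in terms of $\pi\chi l(X)$. So we first bound $\pi\chi l(X)$ above by $\pi\chi c(X)$, and then replace $\pi\chi c(X)$ by $w\psi_cF(X)$ using compactness.

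Concretely, let $X$ be compact Hausdorff. Since $X$ is Hausdorff, Theorem~\ref{beautiful} gives $|X|\leq 2^{c(X)\pi\chi l(X)}$. Every centered local $\pi$-base is linked, so $\pi\chi l(p,X)\leq\pi\chi c(p,X)$ for each $p\in X$. Taking suprema gives $\pi\chi l(X)\leq\pi\chi c(X)$, as noted after the definitions in \S2. Because $X$ is compact Hausdorff, Theorem~\ref{compact1} yields $\pi\chi c(X)=w\psi_cF(X)$. Exponentiation is monotone in the exponent, so
$$|X|\leq 2^{c(X)\pi\chi l(X)}\leq 2^{c(X)\pi\chi c(X)}=2^{c(X)w\psi_cF(X)}.$$

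No step here is a genuine obstacle; the work was done in Theorems~\ref{beautiful} and~\ref{compact1}. The one point to check is that all the cardinal functions are defined and infinite, so the products and exponents make sense. Each function includes $+\omega$ or is defined as a least infinite cardinality. Also, $w\psi_cF(X)$ is actually attained as a finite quantity: it is bounded by $\pi\chi c(X)\leq\chi(X)$, since a neighborhood base at a point is a centered local $\pi$-base and hence, by Proposition~\ref{basicprop}, a weak closed pseudobase with the finite intersection property.
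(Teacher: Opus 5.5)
Your proposal is correct and is exactly the paper's argument: chain Theorem~\ref{beautiful} with $\pi\chi l(X)\le\pi\chi c(X)$ and Theorem~\ref{compact1}. Only the inequality $\pi\chi c(X)\le w\psi_cF(X)$ from Theorem~\ref{compact1} is actually used. One wording slip in your last paragraph: ``attained as a finite quantity'' should read ``well defined''. These cardinals are infinite by definition, and what you have shown is that the family over which the minimum is taken is nonempty.
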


\begin{question}
Is there a (necessarily non-compact) Hausdorff space for which $w\psi_cF(X)<\pi\chi c(X)$?
\end{question}

We're heading towards Theorem~\ref{compact2}. We first need a proposition.

\begin{proposition}\label{pseudo}
Let $X$ be a Tychonoff space. The following are equivalent.
\begin{itemize}
\item [(a)] $X$ is pseudocompact.
\item [(b)] Every locally finite family of open sets is finite.
\item [(c)] Every discrete family of open sets is finite.
\end{itemize}
\end{proposition}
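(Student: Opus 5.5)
We prove the cycle of implications (a)$\Rightarrow$(b)$\Rightarrow$(c)$\Rightarrow$(a). The middle implication is immediate: a discrete family is in particular locally finite, so (b) gives (c).

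For (a)$\Rightarrow$(b), suppose $\{U_n:n<\omega\}$ is an infinite locally finite family of nonempty open sets; passing to a countable subfamily keeps local finiteness. Choose $x_n\in U_n$. By complete regularity pick continuous $f_n:X\to[0,1]$ with $f_n(x_n)=1$ and $f_n\equiv 0$ off $U_n$, and put $f=\sum_n n f_n$. Each point has a neighborhood meeting only finitely many $U_n$, so on that neighborhood $f$ is a finite sum of continuous functions. Hence $f$ is continuous. On the other hand $f(x_n)\geq n$, so $f$ is unbounded, and $X$ is not pseudocompact.

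For (c)$\Rightarrow$(a), which I expect to be the main obstacle, suppose $f:X\to\mathbb{R}$ is continuous and unbounded; replacing $f$ by $|f|$ we may assume $f\geq 0$. Inductively choose points $x_n$ with $f(x_{n+1})>f(x_n)+3$; this is possible because $f$ is unbounded. Set $U_n=f^{-1}\bigl((f(x_n)-1,f(x_n)+1)\bigr)$. These are nonempty open sets, and their $f$-images lie in pairwise disjoint intervals that are at least distance $1$ apart.

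It remains to check that $\{U_n\}$ is discrete. Given $x\in X$, the open set $W=f^{-1}\bigl((f(x)-\tfrac12,f(x)+\tfrac12)\bigr)$ contains $x$, and $f(W)$ is an interval of length $1$. An interval of length $1$ can meet at most one of the intervals $(f(x_n)-1,f(x_n)+1)$, since consecutive ones are separated by gaps of length greater than $1$. Therefore $W$ meets at most one $U_n$. So $\{U_n:n<\omega\}$ is an infinite discrete family of open sets, contradicting (c).

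The only delicate points are the choice of spacing and the neighborhood width: the gaps between the target intervals must exceed the length of $f(W)$. Tychonoff is used only in (a)$\Rightarrow$(b) to produce the bump functions $f_n$.
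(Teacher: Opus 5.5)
Your proof is correct, but it takes a different route from the paper. The paper cites (a)$\Leftrightarrow$(b) as well known and proves only (c)$\Rightarrow$(b). It does this by a shrinking argument with open sets. Starting from an infinite locally finite family $\{U_n\}$ of nonempty open sets, it uses local finiteness of $\{\overline{U_n}\}$ together with regularity to choose indices $n_0<n_1<\cdots$, points $x_k\in U_{n_k}$, and open sets $V_k\ni x_k$ with $\overline{V_k}\cap\overline{U_n}=\emptyset$ for all $n\geq n_{k+1}$. The sets $W_k=V_k\cap U_{n_k}$ then have pairwise disjoint, locally finite closures, so they form an infinite discrete family.

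You instead close the cycle (a)$\Rightarrow$(b)$\Rightarrow$(c)$\Rightarrow$(a) using real-valued functions: the locally finite sum $\sum_n nf_n$ of bump functions for (a)$\Rightarrow$(b), and preimages of well-separated intervals under an unbounded $f$ for (c)$\Rightarrow$(a).

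What each approach buys:
\begin{itemize}
\item The paper's argument shows that (b) and (c) are equivalent in every regular space, without mentioning functions at all. It leaves the link to pseudocompactness to the literature.
\item Your proof makes the proposition self-contained. Your (c)$\Rightarrow$(a) step needs no separation axiom at all, so Tychonoff is used only in (a)$\Rightarrow$(b), as you note.
\item Your (c)$\Rightarrow$(a) is exactly the direction used in Theorem~\ref{compact2}: a non-pseudocompact space has an infinite discrete family of nonempty open sets. You get it in one step rather than by passing through (b).
\end{itemize}

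One small wording point: $f(W)$ need not be an interval of length $1$. It is only contained in the open interval $(f(x)-\tfrac12,f(x)+\tfrac12)$, which is all your argument actually uses.
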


\begin{proof}
That the first two are equivalent is well-known. We show the third condition implies the second. Suppose $X$ has an infinite locally finite family of nonempty open sets $\{U_n:n<\omega\}$. Then $\{\cl{U_n}: n<\omega\}$ is also locally finite. Pick $x_0\in U_0$ and set $n_0=0$. By local finiteness and regularity there must be an open neighborhood $V_0$ of $x_0$ and an integer $n_1>n_0$ such that $\cl{V_0}\meet\cl\{U_n\}=\es$ for every $n\geq n_1$. Suppose we have constructed an increasing sequence $\{n_i:i\leq k\}\sse\omega$ and open sets $\{V_i:i<k\}$ such that $V_i\meet U_{n_i}\neq\es$, for every $i<k$ and $\cl{V_i}\meet\cl{U_n}=\es$, whenever $n\geq n_{i+1}$, for every $i<\kappa$. Choose $x_k\in U_{n_k}$, an integer $n_{k+1}>n_k$ and an open neighborhood $V_k$ of $x_k$ such that $\cl{V_k}\meet\cl{U_n}=\es$ for every $n\geq n_{k+1}$. 

Finally set $W_k=V_k\meet U_{n_k}$. Then the family $\{\cl{W_k}:k<\omega\}$ is a locally finite pairwise disjoint family of closed set and therefore it is discrete. Then $\{W_k:k<\omega\}$ is an infinite discrete family of nonempty open subsets of $X$.
\end{proof}

The proof of the following theorem is a modification of the proof of Proposition 2.13 in~\cite{C24a}. 

\begin{theorem}\label{compact2}
Let $X$ be a locally compact non-pseudocompact Hausdorff space and let $Y=X\un\{p\}$ be the one-point compactification of $X$ where $p$ is the point at infinity. Then $p$ has a countable decreasing local $\pi$-base in $Y$.
\end{theorem}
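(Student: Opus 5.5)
Since $X$ is locally compact Hausdorff it is Tychonoff, so Proposition~\ref{pseudo} applies: as $X$ is not pseudocompact, it has an infinite discrete family of nonempty open sets. Pass to a countably infinite subfamily $\{W_n:n<\omega\}$. Discreteness will be used in two ways: the sets are pairwise disjoint, and every point of $X$ has a neighborhood meeting at most one (in particular only finitely many) $W_n$.

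The candidate local $\pi$-base at $p$ in $Y$ is
$$A_n=\Un_{k\geq n}W_k,\qquad n<\omega.$$
Each $A_n$ is a nonempty open subset of $X$, hence of $Y$, and clearly $A_0\supseteq A_1\supseteq\cdots$. Thus $\{A_n:n<\omega\}$ is countable and decreasing, and it only remains to show that it is a local $\pi$-base at $p$.

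Let $U$ be an open set in $Y$ with $p\in U$. By the definition of the one-point compactification, $K=Y\minus U$ is a compact subset of $X$. Each $x\in K$ has an open neighborhood meeting only finitely many $W_n$. Finitely many of these neighborhoods cover $K$, so $K$ meets only finitely many $W_n$. Choose $N$ larger than all such indices. Then $W_k\meet K=\es$ for every $k\geq N$, so $A_N\sse Y\minus K=U$. Hence $\{A_n:n<\omega\}$ is a countable decreasing local $\pi$-base at $p$.

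The only real content is the appeal to Proposition~\ref{pseudo}. Everything afterward is the routine compactness argument above, and it in fact needs only local finiteness of the family, not full discreteness.
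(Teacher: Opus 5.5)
Your proof is correct and is essentially the paper's argument: the same infinite discrete family from Proposition~\ref{pseudo}, and the same compactness argument showing that the compact set $K=Y\minus U$ meets only finitely many of its members. The one difference is the route to a decreasing base. The paper first builds the countable centered family $\{\Un(\scr{U}\minus\scr{F}):\scr{F}\in[\scr{U}]^{<\omega}\}$, using disjointness to show it is closed under finite intersections, and then applies Proposition~\ref{decreasing}; your tails $A_n$ are decreasing from the start, which also makes your remark that local finiteness suffices evident.
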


\begin{proof}
By Proposition~\ref{pseudo}, there exists an infinite discrete family $\scr{U}$ of open sets. By choosing a countably infinite subset of $\scr{U}$, which remains discrete, we can assume that $\scr{U}$ is countable. Let
$$\scr{B}=\left\{\Un\scr{U}\minus\scr{F}:\scr{F}\in[\scr{U}]^{<\omega}\right\}$$
and note $|\scr{B}|\leq\left|[\scr{U}]^{<\omega}\right|=\omega$. We show that $\scr{B}$ is a local $\pi$-base at $p$ with the finite intersection property. Note that for every $B\in\scr{B}$ we have that $B$ is open in $Y$ and nonempty.

We first show that $\scr{B}$ has the finite intersection property. Let $\{\Un\scr{U}\minus\scr{F}_1,\ldots, \Un\scr{U}\minus\scr{F}_n\}$ be a finite family of elements of $\scr{B}$. Let $x\in\Meet_{i=1}^n\Un\scr{U}\minus\scr{F}_i$. Then $x\in\Un\scr{U}\minus\scr{F}_i$ for all $i=1,\ldots,n$. Then $x\in U\in\scr{U}$ for only one $U\in\scr{U}$ because $\scr{U}$ is \emph{cellular}. This implies $x\in\Un\left(\scr{U}\minus\Un_{i=1}^n\scr{F}_i\right)$. Likewise, if $x\in\Un\left(\scr{U}\minus\Un_{i=1}^n\scr{F}_i\right)$, then $x\in\Meet_{i=1}^n\Un\scr{U}\minus\scr{F}_i$. It follows that $\Meet_{i=1}^n\Un\scr{U}\minus\scr{F}_i=\Un\left(\scr{U}\minus\Un_{i=1}^n\scr{F}_i\right)\in\scr{B}$ and so $\scr{B}$ has the finite intersection property.

Now we show that $\scr{B}$ is a local $\pi$-base at $p$ in $Y$. As $\scr{U}$ is discrete in $X$, for all $x\in X$ there exists an set $W_x$ open in $X$ (and open in $Y$) containing $x$ that misses all but one member of $\scr{U}$. Let $\scr{F}_x$ be a subset of $\scr{U}$ with at most one element such that $W_x\meet\Un\scr{U}\minus\scr{F}_x=\es$. Let $p\in Y\minus K$, a general open set in $Y$ containing $p$, where $K$ is compact in $X$. Now, $\{W_x:x\in K\}$ is an open cover of $K$ in $X$ and so there exist $x_1,\ldots,x_n\in K$ such that $K\sse\Un_{i=1}^nW_{x_i}$. Then $\Un_{i=1}^nW_{x_i}\meet\Meet_{i=1}^n\Un\scr{U}\minus\scr{F}_{x_i}=\es$ and $\Meet_{i=1}^n\Un\scr{U}\minus\scr{F}_{x_i}\sse Y\minus K$. Now because $\scr{B}$ has the finite intersection property we see that $\Meet_{i=1}^n\Un\scr{U}\minus\scr{F}_{x_i}\in\scr{B}$.

Therefore $\scr{B}$ is a centered local $\pi$-base at $p$ in $Y$. Then $\pi\chi c(p, Y)\leq |\scr{B}|=\omega$. Now use Proposition~\ref{decreasing} to conclude that $\pi\chi d(p, X)=\omega$. This completes the proof.
\end{proof}

\section{Examples.}

We give a simple example of a compact Hausdorff space $X$ such that $\pi\chi d(X)<\chi(X)$.

\begin{example} Let $\kappa$ be any infinite cardinal and let $D$ be the discrete space of cardinality $\kappa$. Let $X$ be the one-point compactification of $D$, where $p$ is the point at infinity. It can be seen that $\chi(X)=\kappa$. However, as $\{\{d\}:d\in D\}$ is an infinite discrete family of open sets in $D$, by Theorem~\ref{compact2} we see that $\pi\chi d(p, X)=\omega$. Furthermore, since $\{d\}$ is open in $X$ for each $d\in D$, we have that $\pi\chi d(d,X)=\chi(d, X)=\omega$ for each $d\in D$. This shows that $\pi\chi d(X)=\omega$. Therefore the gap between $\pi\chi d(X)$ and $\chi(X)$ can be made as large as we want by choosing $\kappa$ large enough.
\end{example}

We next give an example of a compact Hausdorff space $X$ for which $\pi\chi(X)<\pi\chi l(X)$.

\begin{example}
Let $X=\beta\omega$, the \v Cech-Stone compactification of the natural numbers. It is well known that $\pi\chi(X)=\omega$ and that $X$ has the countable chain condition, as the natural numbers form a countable dense set of isolated points in $X$. If $\pi\chi l(X)=\omega$, then by Theorem~\ref{beautiful} it would follow that $|X|\leq 2^{\omega\cdot\omega}=2^{\omega}=\mathfrak{c}$. However it is well known that $|X|=2^{\mathfrak{c}}$. Therefore $\pi\chi l(X)$ is uncountable and $\pi\chi(X)<\pi\chi l(X)$.
\end{example}

We next construct an example of a compact Hausdorff space $X$ such that $2^{c(X)\pi\chi d(X)}$ $<2^{c(X)\chi(X)}$ and $d(X)^{\pi\chi d(X)}<d(X)^{\chi(X)}$, showing that Theorems~\ref{beautiful} and~\ref{density} are strict improvements of the Hajnal-\juhasz~and Pospi{\v s}il theorems, respectively.

\begin{example}
Let $\Psi(\scr{A})$ be the $\Psi$-space (also called Mr\'owka-Isbell space) $\Psi(\scr{A})=\omega\un\{x_A:A\in\scr{A}\}$, where $\scr{A}$ is a \emph{non}-maximal almost disjoint family of infinite subsets of $\omega$ and each $x_A$ is a point corresponding to $A\in\scr{A}$. (A family of sets is \emph{almost disjoint} if the intersection of any two is a finite set, possibly empty). Each $n$ is isolated in $\Psi(\scr{A})$ for $n\in\omega$ and for each $A\in\scr{A}$ the point $x_A$ has a neighborhood base consisting of all sets of the form $\{x_A\}\un (A\minus F)$, where $F\sse A$ is finite. It is well known that $\Psi(\scr{A})$ is locally compact, Hausdorff, first countable, separable, and that $\{x_A:A\in\scr{A}\}$ is a closed discrete set in $\Psi(\scr{A})$. (See~\cite{HHH18} for further information about $\Psi$-spaces). Furthermore, $\scr{A}$ can be chosen to have cardinality $\mathfrak{c}$. (See the beginning of \S 2 in ~\cite{HHH18} for two constructions of this). As $\scr{A}$ is non-maximal, it follows that $\Psi(\scr{A})$ is not pseudocompact.

Our example $X$ will be the one point-compactification of $\Psi(\scr{A})$; that is, $X=\Psi(\scr{A})\un\{p\}$ where $p$ is the point at infinity. Observe $X$ is also separable. By Theorem~\ref{compact2} we have $\pi\chi d(p, X)=\omega$. As $\Psi(\scr{A})$ is first countable and $\Psi(\scr{A})$ is open in $X$, we see that $\chi(y,X)=\omega$ for every $y\in\Psi(\scr{A})$ and thus $\pi\chi d(X)=\omega$. 

Now, it can be seen that $\chi(X)=\mathfrak{c}$ because $|\Psi(\scr{A})|=\mathfrak{c}$. As $X$ is separable, it has $d(X)=\omega$ and also the countable chain condition. Therefore $c(X)=\omega$. We have,
$$2^{c(X)\pi\chi d(X)}=2^{\omega\cdot\omega}=2^\omega=\mathfrak{c}<2^\mathfrak{c}=2^{\omega\cdot\chi(X)}=2^{c(X)\chi(X)},$$
and
$$d(X)^{\pi\chi d(X)}=\omega^\omega=\mathfrak{c}<2^\mathfrak{c}=\omega^\mathfrak{c}=d(X)^{\chi(X)}.$$

This shows that Theorems~\ref{beautiful} and~\ref{density} are strict improvements of the Hajnal-\juhasz~and Pospi{\v s}il theorems, respectively.
\end{example}

We conclude with the following question.

\begin{question}
Is there an example of a Hausdorff space $X$ such that $\pi\chi l(X) < \pi\chi c(X) < \pi\chi d(X)$?
\end{question}

\end{document}